\definecolor{cerulean}{rgb}{0,.48,.65} 
\definecolor{magenta}{rgb}{.5,0,.5} 
\definecolor{dred}{rgb}{.5,0,0} 
\definecolor{green}{rgb}{0,.5,0} 
\definecolor{blue}{rgb}{0,0,0.5} 
\definecolor{black}{rgb}{0,0,0} 
\definecolor{dgreen}{rgb}{0,.3,0} 
\definecolor{vdred}{rgb}{.3,0,0} 
\definecolor{red}{rgb}{1,0,0} 
\definecolor{salmon}{rgb}{0.98,0.50,0.45} 
\definecolor{gray}{rgb}{.5,.5,.5} 
\definecolor{seagreen}{rgb}{0.13,0.70,0.67} 
\definecolor{chartreuse}{rgb}{0.40,0.80,0.00}
\definecolor{cornflower}{rgb}{0.39,0.58,0.93} 
\definecolor{gold}{rgb}{0.80,0.68,0.00}
\theoremstyle{plain}
\newtheorem{theorem}{Theorem}
\newtheorem{thm}{Theorem}[section]
\newtheorem{lemma}[thm]{Lemma}
\theoremstyle{definition}
\newtheorem{Open questions}[thm]{Open questions}
\newtheorem{Open question}[thm]{Open question}
\newtheorem{Open problems}[thm]{Open problems}
\newtheorem{Open problem}[thm]{Open problem}
\newlength{\foo}
\newlength{\bah}
\def\Bbb{\mathbb}
\def\Z{\Bbb{Z}}
\def\N{\Bbb{N}}
\def\ni{\noindent}
\def\Area{\hbox{\rm Area}}
\def\R{\hbox{\rm R}}
\def\F+L{\hbox{$\textup{F}\!_+\textup{L}$}}
\def\ssm{\smallsetminus}
\def\onto{{\kern3pt\to\kern-8pt\to\kern3pt}}
\def\<{\langle}
\def\>{\rangle}
\def\|{{\ |\ }}
\newcommand{\set}[1]{\left\{#1\right\}}
\newcommand{\restricted}[1]{\left|_{#1} \right.}
\newcommand{\abs}[1]{\left|#1\right|}
\renewcommand{\ni}{\noindent}
\renewcommand{\ss}{\smallskip}
\def\*{^{\star}}
\begin{document}

\title{Computing area in presentations of the trivial group}

\author{Timothy Riley}

\date \today

\begin{abstract}
\ni    We give polynomial-time  dynamic-programming algorithms finding the areas of words in the   presentations  $\langle a, b \mid a, b \rangle$ and    $\langle  a, b \mid a^k, b^k; \ k \in \N \rangle$ of the trivial group. 

In the first of these two cases, area  was studied under the name \emph{spelling length} by Majumdar, Robbins and Zyskin in the context of the design of liquid crystals.  We explain how the problem of calculating it can be  reinterpreted in terms of RNA-folding.   In the second,  area is what   Jiang called \emph{width} and studied when counting fixed points for  self-maps of a compact surface, considered up to homotopy.    
In 1991 Grigorchuk and Kurchanov gave an algorithm computing width and asked whether it could be improved to polynomial time.  We  answer this affirmatively.    
 \ss
\\
\ni \footnotesize{\textbf{2000 Mathematics Subject Classification:  20F05, 20F10, 68W32}}  \\ 
\ni \footnotesize{\emph{Key words and phrases:} exact area, group presentation, width, RNA folding}
\end{abstract}

\maketitle

\section{Introduction} \label{intro}

A word  $w$ on an alphabet $A^{\pm 1}$ represents the identity in the group    presented by  $\langle A \mid R \rangle$ when  $w$ freely equals $(u_1^{-1} r_1^{\epsilon_1} u_1) \cdots (u_N^{-1} r_N^{\epsilon_N} u_N)$ for some words $u_i$ on $A^{\pm 1}$, some $\epsilon_i \in \set{1, -1}$,  some $r_i \in R$, and some integer $N \geq 0$.   Denote by $\Area(w)$ the  minimal $N$ among all such products.  

The reason    \emph{area} is an appropriate term here is that   such a product  has a natural geometric interpretation as a disc (more precisely, a `van~Kampen diagram') of area $N$ spanning a loop associated to $w$.    There is an extensive literature on   optimal upper bounds (`Dehn functions') on $\Area(w)$ as a function of the length $\ell(w)$ of $w$.   Such bounds are usually presented asymptotically and   involve considering only `worst' instances of $w$ within a given length. Our focus here, by contrast, is  calculating $\Area(w)$ \emph{exactly} for \emph{all} $w$ that represent the identity.  Perhaps surprisingly, this turns out to be subtle even for some of the most elementary   presentations of the trivial group.   

Here is the main theorem we will discuss in this paper. 
(For simplicity, we work with an alphabet of two letters $a$ and $b$, but all our results and arguments easily extend to any finite alphabet.)  

\begin{theorem} \label{main}
There are deterministic algorithms to compute $\Area(w)$ in 
\begin{enumerate}
\item   $\langle a, b \mid a, b \rangle$ in time $\tilde{O}(n^{2.8603})$, \label{first}
\item  $\langle  a, b \mid a^k, b^k; k \in \N \rangle$  in time $\tilde{O}(n^4)$, \label{second}
\end{enumerate}
where $n = \ell(w)$.  
\end{theorem}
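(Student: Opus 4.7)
Part (1). The plan is to reinterpret the area as a maximum non-crossing matching problem. Since every relator is a single letter, every $2$-cell in a van Kampen diagram for $w$ is a monogon, and any such diagram decomposes as a disjoint collection of monogons connected by trees. Travelling round $\partial D$, each letter of $w$ is either the single edge of a monogon (contributing $+1$ to the area) or one of two paired traversals of a tree edge, the pair necessarily carrying inverse labels and arranged in non-crossing fashion on $\partial D$. Conversely, any non-crossing partial matching of $\{1,\ldots,n\}$ pairing inverse letters lifts to a van Kampen diagram whose area is the number of unmatched positions. Thus
\[
\Area_{\langle a,b\mid a,b\rangle}(w) \;=\; n - 2M(w),
\]
where $M(w)$ is the maximum size of a non-crossing partial matching of $\{1,\ldots,n\}$ pairing positions with inverse letters. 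Computing $M(w)$ is the RNA-folding problem, so I would invoke the truly subcubic algorithm of Bringmann, Grandoni, Saha and Vassilevska Williams, which solves RNA-folding in $\tilde O(n^{2.8603})$ time via a reduction to bounded-difference min-plus matrix multiplication.

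Part (2). Now every $2$-cell has boundary $a^k$ or $b^k$ for some $k\geq 1$, and because the relator set is closed under summing exponents we may merge every maximal connected union of same-letter $2$-cells into a single $2$-cell, which I call a \emph{blob}. A blob has boundary $x^{\ell}$ for some $x \in \{a^{\pm 1}, b^{\pm 1}\}$ and $\ell \geq 1$, and it contributes to $\partial D$ as a family of non-crossing arcs all labelled $x$; the remaining letters of $w$ are paired tree-edge traversals. I would then set up a two-table DP with $f(i,j)$ the minimum area of $w[i..j]$ in isolation and $g(i,j,x)$ the minimum area of $w[i..j]$ given an ambient blob of letter $x$, meaning that any $x$-labelled position in $[i,j]$ may be annexed to that blob for free. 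The recurrence peels off the structure containing $w_i$: it pairs with some later $w_k = w_i^{-1}$ as a tree edge, or (when $w_i=x$) attaches to the ambient blob, or opens a fresh blob of its own letter whose last arc in $[i,j]$ lies at some $k$ with $w_k = w_i$, contributing $1 + g(i+1,k-1,w_i) + g(k+1,j,x)$.

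The main obstacle is correctness, particularly that tracking a single ambient blob letter in $g$ is lossless. The geometric input is planarity: a freshly opened blob is a planar disc walling off its interior from whatever was previously ambient, so the new blob's letter replaces the old one as the ambient in the interior recursive call, while outside the new blob the original ambient persists. An induction on the number of $2$-cells then shows that every optimal van Kampen diagram may be reorganised to match this outermost-first peeling, so the DP computes the true area. A direct analysis of state size and per-state work bounds the total running time by $\tilde O(n^4)$, as required.
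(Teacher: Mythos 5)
Part (1) of your proposal is correct and is essentially the paper's argument: the paper likewise identifies $\Area(w)$ in $\langle a,b\mid a,b\rangle$ with the deletion count of an optimal null-sequence, recognises this as the Nussinov--Jacobson RNA-folding recurrence, and quotes the Bringmann--Grandoni--Saha--Vassilevska~Williams bound of $\tilde{O}(n^{2.8603})$.

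Part (2) takes a genuinely different route. The paper formalises the tree-of-discs picture via \emph{cactus diagrams}, whose faces are allowed to have perimeter words of zero exponent sum and then cost nothing, and its dynamic program runs over \emph{syllables} with a state that records the running exponent $r$ (with $|r|\le n$) accumulated on the boundary syllable as faces are pinched off; that extra integer coordinate is what makes the state space $O(n^3)$ and the time $\tilde{O}(n^4)$. You instead work letter-by-letter, push all zero-exponent-sum cancellation into a non-crossing inverse-pair matching, and keep only the \emph{letter} of the ambient blob in the state. If this works it is actually an $O(n^3)$ algorithm ($O(n^2)$ states, $O(n)$ transitions), not $\tilde O(n^4)$ as you claim, which should make you suspicious enough to check the details. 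Two of those details are genuine gaps. First, your recurrence is silent about what ambient is passed to the \emph{interior of a tree edge}, and one of the two natural choices is wrong: for $w=abab^{-1}a$ (area $2$), opening an $a$-blob at positions $1$ and $5$, tree-pairing the $b$ at position $2$ with the $b^{-1}$ at position $4$, and then annexing the $a$ at position $3$ to the still-ambient $a$-blob yields the answer $1$. The point is that a tree edge, just like a freshly opened blob, walls off its interior from the ambient face, so the interior call must be made with \emph{no} ambient; your planarity discussion covers only the blob case. Second, your correctness claim is a one-sentence assertion, and the step it must actually justify is the one that lets you drop the exponent from the state: a face whose perimeter has zero exponent sum (e.g.\ $b^k\cdots b^{-k}$) contributes $0$ to area, whereas your DP charges every blob a flat $1$. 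This is only harmless because any such face can be refined into a non-crossing inverse-pair matching of its boundary letters that is compatible (non-crossing) with the rest of the diagram, so the minimum over your unconstrained decompositions still equals the minimum over genuine van Kampen diagrams. That refinement lemma is true but is precisely the content you need to prove; it plays the role of the paper's pinch-vertex lemma together with its exponent bookkeeping, and without it the accounting ``area $=$ number of blobs'' is unjustified. With those two points repaired (and the cosmetic fix that annexation and blob-closing must admit both $x$- and $x^{-1}$-labelled positions), your argument would go through and would in fact improve on the paper's stated running time.
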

The $\tilde{O}$-notation here differs from $O$-notation in that it hides  a polylogarithmic factor.  We obtain $\tilde{O}$-estimates by   counting arithmetic operations and look-ups from tables; the notation allows us  to suppress the cost of performing the arithmetic.    
 
Part (2) of Theorem~\ref{main} answers a question of Grigorchuk and Kurchanov as I will discuss further in the next section.

Given that  Dehn functions are long-studied and  that $\langle a, b \mid a, b \rangle$ is one of the   most elementary presentations of the trivial group imaginable, it is hard to think there might be  much to computing area there.   Here is why it is not so easy.   A word $w$ on $a^{\pm 1}, b^{\pm 1}$ can be converted to the empty word by a sequence (called a \emph{null-sequence})  of two types of move: (1)  delete a letter, (2)  cancel an inverse pair of adjacent letters $a^{\pm 1} a^{\mp 1}$ or $b^{\pm 1} b^{\mp 1}$.  An equivalent definition of $\Area(w)$ in  $\langle a, b \mid a, b \rangle$  is the minimal $N$ such that there is a null-sequence  that employs $N$ moves of the first type.  (This equivalence is a special case of a well-known general relationship between null-sequences and area, explained for instance in Section~II.2.5  of \cite{BRS}.)   So the challenge is to use the moves of the first type expediently  to maximize the number of moves of the second type.  For instance $\Area(a^2b a^{-2} b^{-1}) =2$: the best one can do is delete the $b$ and the $b^{-1}$, and cancel the $a^2$ with the $a^{-2}$.       

Perhaps surprisingly, part (1) of Theorem~\ref{main} leads us to biology.  
A dynamic-programming algorithm by Ruth~Nussinov and  Ann~Jacobson  in their influential 1980 article~\cite{NJ} on RNA-folding computes area in $\langle a, b \mid a, b \rangle$  in cubic time.  The point is that $a$, $a^{-1}$, $b$ and $b^{-1}$  can be thought of like nucleotides;   $a$ with $a^{-1}$ and   $b$ with $b^{-1}$  correspond to  matched base pairs.  The problem Nussinov and Jacobson posed and   solved   is  to find a way for an RNA strand to fold against itself so that it maximizes the number of matched base pairs.    This folding is analogous to constructing a null-sequence because of  a ``non-crossing'' (or ``non-knotting'') condition on the matched pairs---the word cannot have the form $u_1x u_2 y u_3 x^{-1} u_4 y^{-1} u_5$ where  $x,x^{-1}$ and  $y,y^{-1}$ are two matched pairs and $u_1, \ldots, u_5$ are subwords. This condition vaguely corresponds to a constraint that the RNA strands should not form a knot in 3-space.    (There is an extensive  literature on this type of problem  in both bioinformatics and computer science.  The survey   \cite{Kao} is a good starting point.)    
 
Here is a  translation of Nussinov and  Jacobson's algorithm to our setting. 
On input a word $w= x_1x_2\cdots x_n$, where   $x_1, \ldots, x_n \in \set{a^{\pm 1}, b^{\pm 1}}$, the idea is to compute  an array of integers $A_{i,j}$ for $1\leq i \leq j \leq n$ which will equal $\Area(x_i \cdots x_j)$ in $\langle a, b \mid a, b \rangle$.  In particular, $A_{1,n}$ will be $\Area(w)$.
 
 \begin{algorithm}[ht]
    \caption{--- Area in $\langle a, b \mid a, b \rangle$ \newline 
  $\circ$ \  Input  a word $w= x_1x_2\cdots x_n$   where  $x_1, \ldots, x_n \in \set{a^{\pm 1}, b^{\pm 1}}$.  \newline  
  $\circ$ \  Return  $\Area(w)$ in time  $\tilde{O}(n^3)$.   }
    \label{Alg: first}
    \begin{algorithmic}
		\State Define $A_{i,i} := 1$ for $i =1, \ldots, n$  and    $A_{i,j} := 0$ for all $i>j$ 
		\State  For $k = 1$ to $n-1$  
		\State  \qquad For $i =1, \ldots, n-k$ define $A_{i,i+k}$ to be the minimum of  
	 	\State \qquad \ \ $\set{A_{i,i+k-1}+1} \cup \set{ A_{i,r-1} + A_{r+1,i+k-1}  \,  \left| \,  i \leq r < i+k \text{ and }  x_r = x_{i+k}^{-1} \right.}  $ 
		\State Return $A_{1,n}$ 
  \end{algorithmic}
  \end{algorithm} 
 
 The reason this algorithm is correct is that the optimal null-sequence for $x_i \cdots x_{i+k}$ either deletes $x_{i+k}$, or it pairs off $x_{i+k}$
with $x_r$ for some $i \leq r<i+k$ such that $x_r$ and $x_j$ are inverses
of each other.  In the first case $A_{i, i+k} = A_{i,i+k-1}+1$.  In the second $A_{i,i+k} = A_{i,r-1} + A_{r+1,i+k-1}$.    The algorithm halts in time $\tilde{O}(n^3)$ because, after computing $A_{i,j}$ for every $i,j$ such that $j-i \leq k-1$, it only takes an additional $O(k)$ operations to compute $A_{i,i+k}$.   

In the decades since Nussinov and   Jacobson's $\tilde{O}(n^3)$ bound, a number of authors have made improvements that shave off  log factors.  Recently Karl~Bringmann, Fabrizio~Grandoni, Barna~Saha, and Virginia~Vassilevska~Williams \cite{BGSW} broke the  $n^3$-barrier by combining fast-matrix multiplication methods and an algorithm of Leslie~Valiant for parsing context free grammars to give an algorithm which runs in time  $\tilde{O}(n^{2.8603})$, so this is the bound we give for \eqref{first} of Theorem~\ref{main}.

The problem  of calculating area in $\langle  a, b \mid a^k, b^k;  \ k \in \N \rangle$ in polynomial time  was posed by Grigorchuk and Kurchanov in their 1991 paper  \cite{GrK}.   The solution we will give, proving   \eqref{second} of Theorem~\ref{main}, will blend   Algorithm~\ref{Alg: first} with another famous dynamic-programming algorithm which we give  below as Algorithm~\ref{Alg: subset sum}.

We stress   that the bounds in Theorem~\ref{main} are in terms of $n=\ell(w)$.  The situation for   $w = a^{i_1} b^{i'_1} .... a^{i_k} b^{i'_k}$  inputted as a sequence of \emph{binary} integers $i_1, i'_1, ..., i_k, i'_k$ is markedly different.  A comparison  with   the subset sum problem, which asks, given   integers $i_1, \ldots, i_k$, whether there  are $j_1 < \cdots <  j_l$  with $l \geq 1$ such that $i_{j_1} + \cdots + i_{j_l} = 0$, makes this clear.  We will prove in Section~\ref{G-K}:

\begin{theorem} \label{comparison}
Computing the areas  of words in  $\langle  a, b \mid a^k, b^k; \ k \in \N \rangle$ is at least as hard as subset sum   in that for non-zero integers $i_1, \ldots, i_k$,   $\Area(a b^{i_1} a b^{i_2} \ldots a b^{i_k}) \leq k$ if and only if there are $j_1 < \cdots <  j_l$ (with $l \geq 1$)
such that $i_{j_1} + \cdots + i_{j_l} = 0$.
\end{theorem}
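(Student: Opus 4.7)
The plan for $(\Leftarrow)$ is constructive. Given indices $j_1<\cdots<j_l$ with $\sum_m i_{j_m}=0$, I first spend $k-l$ relator applications deleting each block $b^{i_j}$ for $j\notin\{j_1,\dots,j_l\}$ (using the relator $b^{|i_j|}$ once per block). The resulting word is freely equal to $a^{s_0}b^{i_{j_1}}a^{s_1}\cdots a^{s_{l-1}}b^{i_{j_l}}a^{s_l}$, where $s_0=j_1$, $s_m=j_{m+1}-j_m$ for $1\le m<l$, and $s_l=k-j_l$, so that $\sum s_m=k$. I then apply the $l-1$ relators $a^{s_1},\dots,a^{s_{l-1}}$ in turn, each deleting one interior $a$-block and letting the two adjacent $b$-blocks free-reduce together. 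Because $\sum_m i_{j_m}=0$, after all $l-1$ such steps the merged $b$-power is $b^0$, leaving $a^{s_0+s_l}$, which one final $a^{s_0+s_l}$-relator kills. Total cost: $(k-l)+(l-1)+1=k$.

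For $(\Rightarrow)$ the plan is to analyze a minimum-area van~Kampen diagram $D$ for $w$. Since every relator is a pure power of a single generator, each $a$-edge of $D$ is incident only to $a$-cells and each $b$-edge only to $b$-cells; a standard folding reduction on a minimum diagram lets me assume no two same-type cells share an edge, so every $a$- and $b$-edge lies on $\partial D$. Hence the $N_a$ $a$-cells partition the $k$ boundary $a$-letters, and planarity of $D$ makes this partition non-crossing with respect to the cyclic order. Drawing the partition in the disk by joining each block's vertices with a path of chords produces a planar chord forest; an Euler-characteristic calculation shows it subdivides the disk into exactly $k-N_a+1$ regions, each of which must contain at least one arc of $\partial D$ on its boundary (otherwise its boundary would be a cycle in the chord forest). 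Each region $R$ thus determines a non-empty subset $Q_R\subseteq\{1,\dots,k\}$, namely the indices of the $b$-blocks lying on its boundary.

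Next I would observe that any $b$-word has area $0$ in $\langle b\mid b^k:k\in\N\rangle$ if its exponent sum is zero and area $1$ otherwise. Hence in a minimum diagram the $b$-cells inside region $R$ contribute $0$ or $1$ to $N_b$ depending on whether $\sum_{j\in Q_R}i_j$ vanishes, giving $N_b=\#\{R:\sum_{j\in Q_R}i_j\ne 0\}$. The hypothesis $N_a+N_b\le k$ then forces this count to be at most $k-N_a$, which is strictly smaller than the total $k-N_a+1$ of regions; so at least one region $R$ satisfies $\sum_{j\in Q_R}i_j=0$, and its non-empty index set $Q_R$ is the required zero-sum subset.

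The main obstacle will be making the minimum-diagram combinatorics rigorous, in particular justifying that every edge can be taken on $\partial D$ (which requires handling the vertex identifications forced by folding) and then verifying non-crossing / planarity in that setting. A convenient packaging is to collapse each $a$-cell to a single vertex, producing an auxiliary van~Kampen diagram over $\langle b\mid b^k:k\in\N\rangle$ whose boundary directly records how the non-crossing partition groups the blocks $b^{i_j}$ into the $k-N_a+1$ regions; the area bookkeeping is then routine.
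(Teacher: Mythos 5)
Your \textquotedblleft if\textquotedblright\ direction coincides with the paper's: the same null-sequence (delete the unselected $b$-blocks, delete the $l-1$ interior $a$-runs letting the chosen $b$-blocks merge and cancel freely, then one final $a$-deletion) and the same count $(k-l)+(l-1)+1=k$. Your \textquotedblleft only if\textquotedblright\ direction, however, takes a genuinely different route. The paper first passes to \emph{cactus diagrams} --- tree-like arrangements of $a$- and $b$-faces meeting only at vertices, in which faces with zero exponent sum are permitted and do not count toward area --- and proves via the pinching construction of Lemma~\ref{why cactus diagrams} that every cactus diagram for $w$ has at least $k+1$ faces: each pinch adds one face and decreases a monovariant (the total syllable length of the non-pure faces, starting at $2k$) by at most $2$. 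Since $w$ contains no $a^{-1}$, every $a$-face has boundary a positive power of $a$ and counts toward area, so area $\leq k$ forces a $b$-face of exponent sum zero. You replace the monovariant by a direct planar count: the $a$-cells induce a non-crossing partition of the $k$ boundary $a$'s, the associated chord forest has $k-N_a$ edges and so cuts the disc into exactly $k-N_a+1$ regions by Euler's formula, each region carries a non-empty set $Q_R$ of $b$-blocks because the forest is acyclic, and each region with $\sum_{j\in Q_R} i_j\neq 0$ must contain a $b$-cell since the exponent sum of a sub-diagram's boundary equals the sum over its cells' relators; the pigeonhole then produces a zero-sum region. The two counts agree --- a minimal cactus diagram has one $b$-face per region, giving $N_a+(k-N_a+1)=k+1$ faces --- but your version localizes \emph{where} the zero-sum subset lives, and your exponent-sum bookkeeping replaces the paper's case analysis of how a pinch affects syllable length.

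The one step you wave through as a \textquotedblleft standard folding reduction\textquotedblright\ is exactly where the paper spends its effort (its lemma converting van~Kampen diagrams to cactus diagrams), and two points there need care. First, merging two same-type cells sharing an edge can produce a face of zero exponent sum, which is not a legal van~Kampen $2$-cell; such a pair must instead be excised (its combined boundary is freely trivial), or one works in the cactus category, which was designed precisely so that zero-sum faces are allowed and cost nothing. Second, two same-type cells may share a disconnected subcomplex, so their union can enclose other portions of the diagram; the merge must first absorb or relocate that enclosed sub-diagram (e.g.\ by treating an innermost offending pair), and the same issue afflicts the claim that afterwards every edge lies on $\partial D$. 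Your closing proposal --- collapse each $a$-cell to a point and pass to an auxiliary diagram over $\langle b \mid b^k;\ k\in\N\rangle$ --- is a workable way to package this, and you correctly flag it as the main obstacle, so I read this as a gap of exposition rather than of substance. Two small remarks: you only need the inequality $N_b \geq \#\{R : \sum_{j\in Q_R} i_j \neq 0\}$, not the equality you assert (the equality does hold in a minimal diagram, since a zero-sum pure $b$-word is freely trivial, but it is unnecessary); and it is worth noting explicitly that no $a$-edge can be a tree edge of $D$, because a tree edge contributes a cancelling pair $a^{\pm 1}a^{\mp 1}$ of boundary letters while $w$ contains no $a^{-1}$ --- this is what guarantees the $a$-cells really do partition all $k$ boundary $a$'s.
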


For $i_1, \ldots, i_k$ inputted in binary,   subset sum is NP-complete \cite{Karp}.  
 But here is a  well-known dynamic programming  algorithm  solving it deterministically in polynomial time as a function of $n := |i_1| + \cdots + |i_k|$.    The idea is to compute an array $S_{p,q}$ where $-n \leq p \leq n$ and $1 \leq q \leq k$ such that $S_{p,q} = 1$ when $i_{j_1} + \cdots + i_{j_l} = p$ for some  $q \leq j_1 < \cdots <  j_l \leq k$ (with $l \geq 1$) and  $S_{p,q} = 0$ otherwise.  

 \begin{algorithm}[ht]
    \caption{--- Subset sum \newline 
  $\circ$ \  Input  non-zero integers   $i_1, \ldots, i_k$.  Define  $n := |i_1| + \cdots + |i_k|$.  \newline  
  $\circ$ \  Declare   in time  $O(n^2)$ whether  $\exists$ $j_1 < \cdots <  j_l$ with $l \geq 1$ such that $i_{j_1} + \cdots + i_{j_l} = 0$.   }
    \label{Alg: subset sum}
    \begin{algorithmic}
		\State Define $S_{p,q} := 0$ for $p,q$ outside ($-n \leq p \leq n$ and $1 \leq q \leq k$)
		\State  For $q = k-1$ to $1$  
		\State \qquad For  $p = -n$ to $n$    
		\State  \qquad \qquad Define $S_{p,q} := \begin{cases}
		   1 \ \text{ if  } S_{p,q+1} =1 \text{ or }  S_{p-i_q,q+1} =1   \text{ or } i_q= p  \\  
	 	  0  \ \text{ otherwise } 
		\end{cases}  $ 
		\State Return $S_{0,1}$ 
  \end{algorithmic}
  \end{algorithm} 

This   works because when $i_{j_1} + \cdots + i_{j_l} = p$ for some  $q \leq j_1 < \cdots <  j_l \leq k$ (with $l \geq 1$), either   $i_q$ contributes to the sum (that is, $q  = j_1$) or it does not (that is, $q  < j_1$), and the first of these possibilities divides into two cases according to whether or not $i_q= p$.
Restricting the range of $p$ to $-n \leq p \leq n$ is appropriate because any sum of numbers from the list $i_1, \ldots, i_k$ has absolute value at most $n$.  The running time (the number of  look-ups from the prior completed parts of the array $S_{p,q}$ plus the number of arithmetic operations) is at most the size of the array, which is $O(n^2)$.     

Faster algorithms for subset sum have been found, including most recently \cite{KX}.

\section{Background, attributions, and acknowledgements} \label{background}
 
I first came to the topic of this article from the unlikely direction of liquid-crystal design.  I thank  Jonathan Robbins    for this.  He introduced me  to the problem of calculating area in  $\langle a, b \mid a, b \rangle$ in 2008, which he and his coauthors, Apala~Majumdar and Maxim~Zyskin,  called   \emph{spelling length}. They wished to calculate it because they had an application to  the design of `nematic liquid crystals in confined polyhedral geometries'        \cite{Robbins2,Robbins1}.   We conclude this article in Section~\ref{lcds} with a  sketch of how the connection  to combinatorial group theory comes about.  

I   discussed  Majumdar, Robbins and Zyskin's problem with Robert Kleinberg, to whom I am grateful   for  recognizing it as   RNA-folding and explaining  Algorithm~\ref{Alg: first} to me.  

Recently, Sergei Ivanov rekindled my interest in these issues, which he calls   \emph{precise area problems}.   
I thank him for discussions  and particularly for drawing my attention to the problem  of calculating area in $\langle  a, b \mid a^k, b^k;  \ k \in \N \rangle$.  Ivanov recognized what Jiang called the \emph{width} of an element of the free group $F(a,b)$ to be area in $\langle a, b \mid a^k, b^k;  \ k \in \N \rangle$.  He told me of the 1991 article \cite{GrK}     in which
 Grigorchuk and Kurchanov had  given an algorithm to compute width   and had asked whether it can be done in polynomial time.    Jiang's  motivation for defining width came from the problem of finding  the minimal number of fixed points in the homotopy class of a continuous self-map of a compact surface.      Section~\ref{lcds} includes some explanation of how this comes to be related to width.

In independent work  Ivanov has also   solved   Grigorchuk and Kurchanov's problem  \cite{Ivanov2}, also  using  Nussinov and   Jacobson's algorithm  as his starting point.  Indeed, he has taken the approach further.  He  gives non-deterministic log-space, linear-time algorithms (from which polynomial time deterministic solutions follow by  Savitch's theorem) calculating area  for a family of  presentations which includes $\langle  a, b \mid a, b  \rangle$ and   $\langle  a, b \mid a^k, b^k; \  k \in \N \rangle$.  He has also extended the techniques to $\langle  a, b \mid a^{-1}b^{-1} a b  \rangle$ and some other related presentations, and has derived  consequences for problems of computing the areas of discs spanned by polygonal curves in the plane.   
 
Finally, I am pleased to thank an anonymous referee for a thoughtful reading.

  \section{The   Grigorchuk--Kurchanov problem: area in $\langle  a, b \mid a^k, b^k; k \in \N \rangle$} \label{G-K}

We begin with some preliminaries concerning diagrams which display how a word which represents the identity in a group freely equals a product of conjugates of the defining relations.  The standard such diagram is known as a  
 van~Kampen diagram (see e.g.\ \cite{BRS}).  For us, it will useful to consider a variant.  We define a    \emph{cactus diagram for a word $w$ on $\set{a^{\pm1}, b^{\pm1}}$} to be a finite planar contractible 2-complex which has
\begin{itemize}
\item edges directed and labeled by $a$ or $b$ (as usual),
\item 2 types of faces: $a$-faces and $b$-faces, around whose perimeters we read a word on  $a^{\pm
1}$ or on $b^{\pm 1}$, respectively,
\item around the perimeter of the complex we read $w$,
\item no two $a$-faces have a common vertex; ditto $b$-faces.
\end{itemize}
 
Figure~\ref{cactus fig} shows an example of a van~Kampen diagram and a cactus diagram.   

The \emph{area} of a cactus diagram is the number of faces whose perimeter
word has non-zero exponent-sum.

\begin{figure}[ht]
\psfrag{a}{\footnotesize{$a$}}
\psfrag{b}{\footnotesize{$b$}}
 \centerline{\epsfig{file=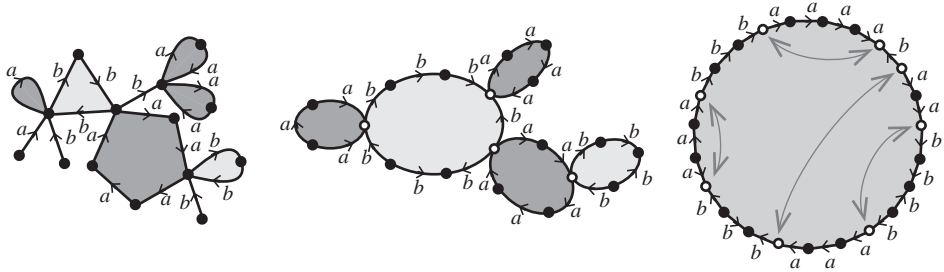}} 
 \caption{Left: A van Kampen diagram   over  $\langle  a, b \mid a^k, b^k; k \in \N \rangle$ for the word $a^{-1} a^2 b^3 a^4   b^{-1} a^2 b^2 b^{-1} b a^3 b b^{-1} b$.  Centre: a  cactus diagram for the same word.  Right: a polygon which gives the cactus diagram after identifying the  `pinch' vertices (the white vertices) as shown. }
  \label{cactus fig}
\end{figure}

A well known lemma of van~Kampen  tells us that $\Area(w)$, as defined in Section~\ref{intro}, is the minimal $N$ such that there is a van~Kampen diagram for $w$ with $N$ faces.  (Again, see  \cite{BRS} or  other surveys.)  Correspondingly:

\begin{lemma}
The area   $\Area(w)$ of  a word  $w$ on $\set{a^{\pm1}, b^{\pm1}}$, with respect to $\langle  a, b \mid a^k, b^k; k \in \N \rangle$   is   the   minimal $N$ such that there is a cactus diagram for $w$ of area $N$.  
\end{lemma}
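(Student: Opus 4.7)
The plan is to prove the two inequalities separately, each using the standard van~Kampen lemma just quoted.

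For $\Area(w)\leq N$ when $w$ admits a cactus diagram $C$ of area $N$, I fill each face $F$ of $C$ locally with van~Kampen material. Its perimeter $p_F$ is a word in one generator, hence freely equal to $a^{e_F}$ or $b^{e_F}$, where $e_F$ is the exponent sum. If $e_F\neq0$ then $a^{\lvert e_F\rvert}$ (respectively $b^{\lvert e_F\rvert}$) is one of the defining relators, and I excise $F$ and refill its interior with a single relator polygon attached to $\partial F$ through a planar tree of free cancellations realising the free equality. If $e_F=0$ then $p_F$ is freely trivial and $F$ is filled by such a tree alone, using no relator face. These local fillings glue across the $1$-skeleton of $C$ into a van~Kampen diagram for $w$ with exactly $N$ relator faces, whence $\Area(w)\leq N$ by the standard lemma.

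For the reverse inequality, start with a van~Kampen diagram $D$ realising $\Area(w)$. Every face of $D$ is labelled by some relator $a^k$ or $b^k$, so is automatically an $a$- or $b$-face of non-zero exponent sum; the only cactus conditions that can fail are the ``no-shared-vertex'' (and the implied ``no-shared-edge'') conditions for same-colour faces. I iterate two local moves. \emph{Merge:} when two same-colour faces share an edge, delete that edge to fuse them into a single same-colour face whose exponent sum is the signed sum of the originals---this drops the face count by one and so does not increase the count of faces of non-zero exponent sum. \emph{Pinch:} when two same-colour faces meet only at a single vertex $v$, duplicate $v$ as in Figure~\ref{cactus fig}, assigning one copy to each face; planarity and the boundary word are preserved. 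Iteration terminates---merges strictly reduce the face count and pinches strictly reduce the number of same-colour vertex incidences---and leaves a cactus diagram for $w$ of area at most $\Area(w)$.

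The main technical subtlety I expect is in the reverse direction when two same-colour faces meet along a disconnected union of arcs or at several separated vertices. The plan is to preprocess $D$ by absorbing any freely cancelling pairs of faces (a pair of same-labelled faces sandwiching a disc that contains no other $2$-cells contributes nothing to the area and may be excised), so that any two same-colour faces meet in a single connected piece, after which the merge and pinch are unambiguous. A further care point is that the pinch must not disconnect $D$, but in a planar contractible $2$-complex the local embedding around $v$ always supplies the requisite reorganisation of sectors at $v$, so the complex remains planar and contractible throughout.
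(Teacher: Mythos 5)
Your first direction (from a cactus diagram of area $N$ to a van~Kampen diagram with $N$ relator faces) is correct and is essentially the paper's argument, phrased as ``excise and refill each face'' rather than ``fold together inverse pairs of boundary edges.'' The genuine gap is in your reverse direction, in the \emph{pinch} move. When two same-colour faces meet only at a vertex $v$ you propose to duplicate $v$ and give one copy to each face, claiming planarity and the boundary word are preserved. This operation is not available. If $v$ is a cut vertex separating the two faces --- as in the two-face van~Kampen diagram for $w = a \cdot a$ consisting of two monogons labelled $a$ wedged at a point --- then duplicating $v$ disconnects the diagram, and the result is no longer a connected, contractible complex with $w$ read around its perimeter. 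If instead the two faces are also joined away from $v$ (say each shares an edge incident with $v$ with an intervening $b$-face), then their two sectors in the link of $v$ are separated by sectors of other faces, and any splitting of $v$ that puts the two $a$-sectors at different copies forces some edge of an intervening face to end at both copies at once; the ``requisite reorganisation of sectors'' you invoke does not exist. The move that actually works --- and the one the paper applies uniformly to a shared vertex, shared edge, or larger shared subcomplex --- is to \emph{merge} the two same-colour faces into a single face, chosen so that the diagram's perimeter word is unchanged; the new face's perimeter is still a word in one generator, the face count drops by one, and the number of faces of non-zero exponent sum cannot increase. With merging in place your pinch move is unnecessary.

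Two smaller points. Your preprocessing for faces meeting along several components assumes any such pair sandwiches a disc containing no other $2$-cells, but two $a$-faces can meet along two disjoint arcs with $b$-faces trapped in the annular region between them, so the reduction to ``a single connected piece'' is not achieved; again, merging the faces outright (rather than deleting one shared edge) avoids the annular-face problem. Also, a van~Kampen diagram may contain edges lying on no face at all; the paper replaces each such edge by a bigon, whose perimeter has zero exponent sum and so costs nothing in area, whereas your procedure never removes them, so its output need not literally be a cactus diagram of the tree-of-discs form required by Lemma~\ref{why cactus diagrams}.
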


\begin{proof}  This result follows from how a cactus diagram can be transformed   into a van~Kampen diagram over  $\langle  a, b \mid a^k, b^k; k \in \N \rangle$ and vice versa.  

Given a cactus diagram, fold together inverse pairs of edges around each face, leaving the perimeter word unchanged---if a face's perimeter word had zero exponent-sum, that face is replaced by  a 1-dimensional complex (in fact, a tree); otherwise, it is replaced by a 2-complex with a single 2-cell (and some 1-cells), and the perimeter word of that 2-cell is  a power of $a$ or $b$.   The result is a van~Kampen diagram whose area (that is, number of 2-cells) is the area of the cactus diagram, since the faces in the cactus diagram whose perimeter had  zero exponent-sum (those  we did not count in our definition of area of a cactus diagram)   collapse to trees and so do not contribute.

In the other direction, given a  van Kampen diagram obtain a cactus diagram of the same or lower area by a succession of moves: if there are two $a$-faces (or similarly two $b$-faces) with a common vertex (or indeed a larger common subcomplex), replace them by a single face in such a way as not to change the diagram's perimeter word; replace any edge in the diagram not in the boundary of a face by a bigon, again not changing the perimeter word. The former type of move decreases the number of 2-cells.  The latter adds 2-cells, but they have  perimeters of  zero exponent-sum, so do not contribute to area.  Therefore the resulting cactus diagram has area at most the area of the   van Kampen diagram.
\end{proof} 
 
The following lemma, which is illustrated in Figure~\ref{cactus fig},  will be crucial for us later in proving the correctness of our algorithms.   It follows essentially immediately from the fact that a cactus diagram is a tree-like arrangement of discs.  
 
\begin{lemma} \label{why cactus diagrams} 
Any cactus diagram $C$ for a word  $w$ on $\set{a^{\pm1}, b^{\pm1}}$, can be obtained as follows.   
Take a polygonal face whose edges are directed and labeled by $a$ and $b$ so that one reads $w$ around the perimeter.   Call the vertices where an $a$-edge meets a $b$-edge \emph{pinch vertices}.  
Pick any pinch vertex $v$.  Identify it with some other (suitably chosen) pinch
vertex so as to convert that face into two faces wedged at a point.
Pick any pinch vertex on either of the new faces, and likewise
identify it with another (suitably chosen)  pinch vertex on the same face.  Repeat until
no pinch vertices remain.   (The conditions on the choices of vertices in the statement of this lemma are crucial.  At each step a pair of pinch vertices is identified.  The first  of these pinch vertices can be chosen  arbitrarily among all pinch vertices on all faces, but then its mate is determined by $C$.)   
\end{lemma}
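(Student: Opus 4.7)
Plan: The plan is to argue by induction on the number of faces $m$ of $C$, with the tree-like structure of cactus diagrams as the key ingredient. The first step will be to establish that the dual graph $T$ of $C$---with nodes the faces of $C$ and edges the unordered pairs of faces sharing a vertex---is a tree. Because no two $a$-faces (and no two $b$-faces) share a vertex, the only vertices shared between distinct faces are pinch vertices; and because $C$ is planar and contractible, each such shared vertex must be a cut vertex of $C$. Hence $T$ is connected and acyclic, i.e., a tree.

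In the base case $m=1$, $C$ is already a single polygon with boundary $w$ and the lemma's construction terminates with no identifications. For $m\geq 2$, I would pick any leaf $F$ of $T$; it meets the rest of $C$ at a unique pinch vertex $v$, shared with exactly one neighbouring face $F'$. I then \emph{un-wedge} $F$ and $F'$ at $v$, replacing these two polygons with a single polygonal region $G$ whose boundary is the concatenation of the boundaries of $F$ and $F'$ spliced at the two copies of $v$. The resulting 2-complex $C''$ has $m-1$ faces, outer perimeter still reading $w$, and the same tree-like structure. To make the induction close cleanly I will state the lemma in slightly greater generality, allowing intermediate 2-complexes whose ``faces'' may carry mixed $a/b$ boundary labels; then by the inductive hypothesis $C''$ is obtained from a polygon of perimeter $w$ by a sequence of pinch-vertex identifications, and appending a final identification of the two copies of $v$ on $\partial G$ splits $G$ back into $F\cup F'$ and recovers $C$.

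The parenthetical freedom-of-choice assertion will fall out of the same induction: at any intermediate stage, each pinch vertex on a face corresponds either to a pinch vertex on the outer perimeter of $C$ (and then stays unidentified forever) or to one of the two copies of an interior shared pinch vertex of $C$, whose mate is the other copy---and the tree structure of $C$ together with the earlier identifications guarantees that both copies still lie on the same current face. Hence the first pinch vertex at each step may be chosen arbitrarily while its mate is then forced by $C$. The main obstacle I anticipate is establishing the dual-tree structure rigorously from planarity, contractibility, and the no-same-type-sharing hypothesis; once that is in hand, the remainder of the argument is a routine bookkeeping induction, with a small amount of care needed to set up the intermediate-diagram generalization correctly.
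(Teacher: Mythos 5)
Your argument is correct and follows exactly the route the paper intends: the paper offers no formal proof of this lemma, asserting only that it ``follows essentially immediately from the fact that a cactus diagram is a tree-like arrangement of discs,'' and your dual-tree-plus-leaf-removal induction (with the observation that each shared vertex joins exactly one $a$-face to exactly one $b$-face, so the mate of any chosen pinch vertex is forced) is a faithful and correct elaboration of that one-line justification.
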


It is convenient to have one further interpretation of $\Area(w)$, via null-sequences like that  we described for $\langle a, b \mid a, b \rangle$ in  Section~\ref{background}.  Here is how this adapts  to $\langle  a, b \mid a^k, b^k; k \in \N \rangle$.  A word $w$ on $a^{\pm 1}, b^{\pm 1}$ can be converted to the empty word by a sequence of two types of move: (1)  delete a subword $a^k$ or $b^k$ where $k \in \Z$, (2)  cancel an inverse pair of adjacent letters $a^{\pm 1} a^{\mp 1}$ or $b^{\pm 1} b^{\mp 1}$.  Then $\Area(w)$ in  $\langle  a, b \mid a^k, b^k; k \in \N \rangle$  is the minimal $N$ such that there is such a \emph{null-sequence} that employs $N$ moves of the first type. 
 
Now consider a word
$w = a b^{i_1} a b^{i_2} \ldots a b^{i_k}$
where $i_1, \ldots, i_k$ are non-zero integers.  Clearly, $\Area(w) \leq
k+1$: delete the subwords $b^{i_j}$ one at a time and then delete all the
$a$ together.  (An associated cactus diagram has a singe $a$-face with perimeter $a^k$ and has one $b$-face attached for each   $b^{i_j}$.)

Recall that Theorem~\ref{comparison} asserts that  $\Area(w) \leq k$ if and only if there are $j_1 < \cdots <  j_l$
such that $i_{j_1} + \cdots + i_{j_l} = 0$.

\begin{proof}[Proof of Theorem~\ref{comparison}]

For the `if' direction, delete the $b^{i_j}$ for which $j \notin \set{j_1,
\ldots, j_l}$ one at a time, then delete the $l-1$ powers of $a$ that sit
between the remaining $b$ letters, then cancel away the remaining $b$  at no
cost, then delete the remaining $a$ letters.   The total cost  is $(k-l)
+ (l-1) +1 =k$.   

For the `only if' direction, notice first that every cactus diagram for $w$ has at least $k+1$ faces as can be seen as follows.  Consider the process of constructing a cactus diagram described in Lemma~\ref{why cactus diagrams}.  As it progresses each face has perimeter labelled (up to cyclic permutation) either by a word on $a, a^{-1}$, or by  a word on $b, b^{-1}$, or by $u_1 v_1 \cdots  u_j v_j$ for some non-empty words $u_i$ on $a, a^{-1}$ and $v_i$ on $b, b^{-1}$.  Say a face has syllable length $1$ or $2j$, accordingly.      
Consider the sum $S$ of the syllable lengths of the faces which do not have syllable length $1$.  Each pinch increases the number of faces by $1$ and either leaves $S$ unchanged or decreases it by $2$.   We arrive at our cactus diagram when $S$ reaches $0$, so every cactus diagram for $w$ indeed has at least $k+1$ faces.

The word around an $a$-face of a cactus diagram for $w$ is a power of $a$.  So a cactus diagram exhibiting $\Area(w) \leq k$ can have no more than $(k-1)$ $b$-faces that contribute to its area and so must have one that is labeled by a 
word $b^{i_{j_1}}   \cdots  b^{i_{j_l}}$ that is made by concatenating some of the $b^{i_j}$ subwords from $w$ and which  has     exponent sum zero.
\end{proof} 

\begin{proof}[Proof of case \eqref{second} of Theorem~\ref{main}]
Now we give our algorithm for area in $\langle a, b \mid a^{k},  b^{k}; \  k \in \N  \rangle$.  
In essence, we  combine    Algorithms~\ref{Alg: first} and \ref{Alg: subset sum}.   As stated, our algorithm  only finds the areas of words   $w$ of the form $a^{i_1} b^{i'_1} .... a^{i_m} b^{i'_m}$ where  $i_1, i'_1, ..., i_m, i'_m$ 
are non-zero integers.  But this represents  no   loss in generality as $\Area(a^{i_1}) = \Area(b^{i'_1}) =1$  for all $i_1, i'_1 \neq 1$, and replacing a word by a cyclic conjugate and freely reducing it does not change its area.

For  $1 \leq j \leq k \leq m$ and for   $r \in \Z$ define
\begin{align*}
w_{j,k} & \  := \  a^{i_j} b^{i'_j} .... a^{i_k} b^{i'_k}, \\ 
w_{r;j,k} & \ :=  \ a^r b^{i'_j} .... a^{i_k} b^{i'_k}, \\
w_{j,k;r}  & \  := \  a^{i_j} b^{i'_j} .... a^{i_k} b^r.
\end{align*}
So  $w_{r;j,k}$ and $w_{j,k;r}$   are $w_{j,k}$ with the first and last `syllable' (respectively) replaced by $a^r$ and  $b^r$ (respectively).  

Let $n =   |i_1| +  |i'_1| +  ... + | i_m| +  |i'_m|$, the length of $w$.  
For triples of integers $j,k,r$ such that $1 \leq j \leq k \leq
m$ and  $|r| \leq n$ our algorithm will compute two arrays of integers $A_{j,k;r}$ and $A_{r;j,k}$, which will be the 
areas of $w_{j,k;r}$ and $w_{r;j,k}$, respectively, for reasons we will explain.  The  computation of $A_{j,k;r}$ and $A_{r;j,k}$ will proceed in increasing order of $|k-j|$. 
There are $\tilde{O}(n^3)$ such triples, $j,k,r$.
 Our algorithm will output $A_{i_1; 1,m}$, which will be the area of $w$.

 \begin{algorithm}[ht]
    \caption{--- Area in $\langle a, b \mid a^{k},  b^{k}; \  k \in \N  \rangle$\newline 
  $\circ$ \  Input  a word $w=a^{i_1} b^{i'_1} .... a^{i_m} b^{i'_m}$  where  $i_1, i'_1, ..., i_m, i'_m$ are   non-zero integers.
    \newline  
  $\circ$ \  Return $\Area(w)$   in time  $O(n^4)$, where $n =  |i_1| +  |i'_1| +  ... + | i_m| +  |i'_m|$.   }
    \label{Alg: second}
    \begin{algorithmic}
    		\State For   $j, k, r$, outside the range ($1 \leq j \leq k \leq
m$ and  $|r| \leq n$) define $A_{j,k;r} := A_{r;j,k} := \infty$
     		\State For $1 \leq j   \leq
m$ and  $|r| \leq n$,  define  $A_{j,j;r}$ and $A_{r;j,j}$ to be $1$  if $r=0$ and to be $2$  otherwise 
		\State For $s=1$ to $m-1$
		\State \qquad  For $j= 1$ to $m-s$ 
		\State \qquad \qquad For $r= -n$ to $n$ 
		\State \qquad \qquad \qquad Define $k:= j+s$  		
		\State  \qquad \qquad  \qquad    Define $A_{j,k;r}$ to be  the minimum of  
		\State  \qquad \qquad \qquad \qquad $\set{ \left. A_{i_j; j,l} + A_{l+1,k;r} \right|   j \leq l  < k } \cup  
		\set{ \left. A_{i_j+i_l; j, l-1} + A_{l+1, k; i'_l +r}  \right|  j \leq l \leq k  }$
		\State    \qquad \qquad  \qquad  Define $A_{r; j,k}$ to be  the minumum of  
		\State  \qquad \qquad \qquad \qquad  $\set{ \left. A_{r;j,l} + A_{l+1,k;i'_k} \right|   j \leq l  < k } 
		\cup \set{ \left. A_{r+i_l; j, l-1}  + A_{l+1, k; i'_k +i'_l}   \right|  j \leq l \leq k  }$
		\State Return $A_{i_1; 1,m}$
  \end{algorithmic}
  \end{algorithm} 

Here is why $A_{j,k;r} = \Area(w_{j,k;r})$ and $A_{r;j,k} = \Area(w_{r;j,k})$.
Consider a polygonal face $f$ with its edges directed and labeled so that around the perimeter we read $w_{j,k;r}$ anticlockwise (say) starting from a pinch vertex $v$.  Lemma~\ref{why cactus diagrams} tells us that in any cactus diagram for $w_{j,k;r}$ (in particular,   one of  minimal area),  $v$ is identified with some
other pinch vertex $u$ on $f$.  This identification  subdivides $f$ into
two faces $f_1$ and $f_2$.  Reading anticlockwise around the boundary of $f$, the vertex $u$ is
either preceded by $a^{\pm 1}$ and followed by $b^{\pm 1}$, or vice versa.
   The forms of the two words around $f_1$ and $f_2$ (read
anticlockwise from the common vertex)  differ accordingly.  In the
first case they are $w_{j,l}$ and $w_{l+1,k;r}$ for some $j \leq l  < k$.  In
the second case they are
$$a^{i_j} b^{i'_j} \cdots a^{i_{l-1}} b^{i'_{l-1}} a^{i_l}  \  \text{  and } \  b^{i'_l} a^{i_{l+1}} b^{i'_{l+1}} \cdots a^{i_k} b^r$$
for some  $j \leq l \leq k$.   These latter two words are cyclic permutations of
$$w_{i_j+i_l; j, l-1} =  a^{i_j + i_l} b^{i'_j} \cdots a^{i_{l-1}} b^{i'_{l-1}} \  \text{   and } \ 
w_{l+1, k; i'_1 +r} =  a^{i_{l+1}} b^{i'_{l+1}} \cdots a^{i_k} b^{i'_l+ r},$$
respectively.

Likewise, if we read $w_{r;j,k}$ around $f$, then  the words around $f_1$ and $f_2$ are  either $w_{r;j,l}$ and $w_{l+1,k}$, or    $$a^{r} b^{i'_j} \cdots a^{i_{l-1}} b^{i'_{l-1}} a^{i_l}  \  \text{  and } \  b^{i'_l} a^{i_{l+1}} b^{i'_{l+1}} \cdots a^{i_k} b^{i'_k}.$$  The latter pair are    cyclic permutations of
$$w_{r+i_l; j, l-1} =  a^{r + i_l} b^{i'_j} \cdots a^{i_{l-1}} b^{i'_{l-1}} \  \text{   and } \ 
w_{l+1, k; i'_k +i'_l} =  a^{i_{l+1}} b^{i'_{l+1}} \cdots a^{i_k} b^{i'_k + i'_l},$$
respectively.

In any such sequence of pinches that creates a
cactus diagram for $w$, the words $w_{j,k;r}$ and  $w_{r; j,k}$ that arise
around the faces have $|r| \leq n$.

Here is why the algorithm halts in time $\tilde{O}(n^4)$.  As already noted, the arrays $A_{j,k;r}$ and  $A_{r;j,k}$ have size $O(n^3)$.  Computing each  $A_{j,k;r}$ and  $A_{r;j,k}$  involves calculating the minimum of $O(n)$ sums of pairs of prior computed entries.  
\end{proof}

In the light of the advances on subset sum in \cite{KX} and on RNA-folding in \cite{BGSW}, it seems likely this  
$\tilde{O}(n^4)$ bound   could be improved.  

\section{Liquid crystal design, and counting regular values   and   fixed points} \label{lcds}

Here is a sketch of the  unlikely association between combinatorial group theory and liquid crystal design found by Majumdar, Robbins and Zyskin (MRZ) \cite{Robbins1}.  

In a liquid crystal display a rectangular block $P$ of `nematic liquid crystal' is sandwiched between two polarizing filters which are offset 90 degrees from each other.  Light passes through the first filter, then through the   liquid crystal, then meets the second filter.   What then happens  depends on the liquid crystal.  The word `nematic' is derived from the Ancient Greek word for `thread.' The molecules in  a nematic liquid crystal are long and straight and line up next to each other,  so are naturally modeled by a continuous unit-vector field $n : P \to S^2$.     (Actually, the molecules lack a  preferred orientation, so a director field $n : P \to \mathbb{R} \textup{P}^2$ may be more appropriate, but such a field can be oriented in straightforward settings.)  The faces of $P$   are  coated with an `alignment layer' which forces the molecules there to line up tangent  to the faces---that is, it imposes a tangent boundary condition on $n$.  The alignment of the  face incident with the first filter and the  face incident with the second are set at 90 degrees to each other, leading  the molecules to arrange themselves in a helical manner twisting 90 degrees through the block.  So arranged, the  molecules  rotate  the polarization of the light 90 degrees, and  it shines through the second filter.   An electric field can be applied  to the two faces of $P$ incident with the filters so as to reconfigure the molecules so they align perpendicular to the two filters.  They then leave the  polarization   unchanged, and so   no  light  emerges.
  
Looking to build on this, researchers have explored  the optic properties of nematic liquid crystals of a variety of shapes and with different  tangent boundary conditions.    The Dirichlet energy  $E(n)  = \int_P  \abs{ \nabla n }^2 dV$   of $n$  measures how variable $n$ is.   The molecules in a nematic liquid crystal arrange themselves so as to  minimize $E(n)$ locally (that is, so that $n$ is harmonic).  This local `arranging' can be looked at as a homotopy, so  determining the infimum of $E(n)$ within a homotopy classes of vector fields  $n : P \to S^2$ satisfying the given boundary tangency conditions is a step towards understanding the  optic properties.    
   
A special  case where there have been significant results on this problem   is when $$P  \ = \  \set{ (x_1,x_2,x_3) \in \R^3 \mid 0 \leq x_i \leq L_i}$$ is a rectangular block of side lengths $L_1$, $L_2$ and $L_3$, which we will assume for convenience are all strictly greater than $1$, and the homotopy class $h$ contains a representative which  is invariant on reflection through each of the three planes $x_i = L_i/2$.  A sphere of radius $1$ and centered at the origin intersects $P$ in a spherical triangle $O$.  Restricting $n$ to $O$ defines a continuous unit vector field $O \to S^2$, and the tangent boundary condition  implies that   $n$ maps points $p$ on   each side $\sigma$  of $O$ to the great circle  of $S^2$ that contains $\sigma$.    (The tangency condition says that the unit tangent vector $n(p)$ at $p$ lies in the face of $P$ containing $\sigma$.  Translating $n(p)$ to begin at the origin, it remains in the plane containing $\sigma$ and its end point is on $S^2$.)

Let $C_T(O, S^2)$ denote the space of all such $O \to S^2$.     MRZ explain that  the $h$ as above are in one-to-one correspondence with the homotopy classes $H$ of $C_T(O, S^2)$.   Moreover, they show that the infimal   Dirichlet energy over $h$ is bounded from above and below by    the infimal   Dirichlet energy over $H$ times suitable   constants.  
This then motivates a search for estimates on the  infimal   Dirichlet energy for homotopy classes $H$ of $C_T(O, S^2)$.

There is a classification of the homotopy classes of $C_T(O, S^2)$, which leads to the following. 
The 2-sphere subdivides into eight spherical triangles, one for each octant of $\mathbb{R}^3$.  
For $v  \in C_T(O, S^2)$ and  regular  values $s_0, s_1, s_2, s_3$ of $v$ (that is, values where $\textup{det} (\nabla v) \neq 0$) in a certain four of these eight spherical triangles, MRZ give  an estimate on $E(v)$ of a constant times  the numbers of preimages  of the $s_0$, $s_1$, $s_2$, and $s_3$.  They then look for a  $\widetilde{v}$  homotopic to $v$   in $C_T(O, S^2)$ that improves this estimate.  Preimages of $s_i$ have a sign according to whether the determinant of $\nabla v$ is positive or negative.  The aim is to change $v$ so as to cancel pairs of preimages of opposite sign.   When this is done optimally, MRZ's estimate then gives the infimal   Dirichlet energy   for the homotopy classes.   

The subtlety is that  such cancellations cannot freely be achieved.  In place of $v: O \to S^2$, consider a map $\phi: D^2 \to  S^2$ from the 2-disc to a 2-sphere. Suppose  $s_0, s_1, s_2, s_3$ are regular values of $\phi$.  View $\phi$ as  a  null-homotopy  of based loops carrying $\phi \restricted{\partial D^2}$ to the constant loop.  
MRZ's estimate for  Dirichlet energy corresponds to counting how many times the loop  crosses      $s_0, s_1, s_2, s_3$ during the null-homotopy.  And the optimization problem corresponds to changing the null-homotopy of   $\phi \restricted{\partial D^2}$ away from a   disc-neighbourhood of $s_0$.  The complement of    that   disc-neighbourhood  in $S^2$ is  $D^2$.  If we regard   $s_1, s_2, s_3$ as punctures, then $\phi \restricted{\partial D^2}$ represents an element of $\pi_1(D^2 \ssm \set{s_1, s_2, s_3})$.  This group is the rank-$3$ free group $F_3$ and a basis $F(a,b,c) = F_3$ can be chosen so that each time a puncture is crossed, the reduced word representing the group element changes by inserting or removing an $a$, $b$ or $c$.  The optimization problem then amounts to taking the reduced word $w$ representing $\phi \restricted{\partial D^2}$ and reducing it to the empty word by  removing an  $a$, $b$ or $c$ (inserting one is always superfluous) and then freely reducing, as few times as possible---in other words, finding the spelling length of $w$.

In the setting of $C_T(O, S^2)$, MRZ  fully describe the $w$ that arise and establish lower bounds on their spelling lengths.  But they remark that the story remains incomplete:  their bounds on spelling length are ad hoc and  they say that they ``are not aware of general results for obtaining the minimum spelling length over a product of conjugacy classes.'' 
  
The way \emph{width} arises in the work of Jiang \cite{Jiang} on minimizing fixed points within homotopy classes
 is harder to p\'recis.  Jiang considers  $f: M \to M$ where $M$ is a connected compact surface $M$ with trivial $\pi_2$.  He characterizes when there exists $g$ homotopic to $f$ with  $k$ fixed points points of indices $i_1,  \ldots, i_k$.  To this end he argues that we can assume the fixed points of $g$ are all in the interior of a certain disc $D \subset M$ and he considers the map  $M \ssm \text{int} \, D \to M \times M \ssm \Delta$, given by $x \mapsto (x,g(x))$, where $\Delta$ denotes the diagonal of $M \times M$.   This induces a map  $\phi: \pi_1 (M \ssm  \text{int} \,  D ) \to \pi_1( M \times M \ssm \Delta)$, the target being the group of pure 2-braids in $M$.   Jiang's characterization  is a number of conditions on $\phi$ including that it maps the group element represented by $\partial D$ to 
 \begin{equation} 
 v_1 B^{i_1}v_1^{-1} \cdots v_k B^{i_k}v_k^{-1} \label{prodconj}
\end{equation} 
  for certain $v_i$ and a certain braid $B$.    
 He then reinterprets these conditions as a collection of equations in the kernel $K$ of the map  $ M \times M \ssm \Delta \to M  \times M$, which is a free group with free basis a certain family of conjugates of $B$.   On account of \eqref{prodconj} appearing in these equations, Jiang's characterization then yields that the minimal $k$ such that there is $g$ that is  homotopic to $f$ and has $k$   fixed points is the minimum among the widths in $K$ of a certain family of words.

\bibliographystyle{alpha}
\bibliography{$HOME/Dropbox/Bibliographies/bibli}

\ni  \textsc{Timothy R.\ Riley} \rule{0mm}{6mm} \\
Department of Mathematics, 310 Malott Hall,  Cornell University, Ithaca, NY 14853, USA \\ \texttt{tim.riley@math.cornell.edu}, \
\href{http://www.math.cornell.edu/~riley/}{http://www.math.cornell.edu/$\sim$riley/}

 \end{document}